\newtheorem{theorem}{Theorem}[section]
\newtheorem{utv*}{Proposition}
\newtheorem{hyp*}{Conjecture}
\newtheorem*{example*}{Example}
\newtheorem{lemma}[theorem]{Lemma}
\newtheorem*{th*}{Theorem}
\theoremstyle{definition}
\newtheorem{defin}{Definition}
\newcommand{\av}[2]{\langle #1\rangle_{_{\scriptstyle #2}}}
\def\sli{\sum\limits}
\def\ili{\int\limits}
\def\la{\lambda}
\def\R{\mathbb{R}}
\def\ep{\varepsilon}
\def\vf{\varphi}
\def\cyr{\fontencoding{OT2}\fontfamily{wncyr}\selectfont}
\DeclareTextFontCommand{\textcyr}{\cyr}
\newcounter{vremennyj}
\title[Strong Muckenhoupt and Reverse H\"older weights]{Dimension free properties of strong Muckenhoupt and Reverse H\"older weights for Radon measures}
\date{\today}
\begin{document}
\author{O. Beznosova}
\address{Department of Mathematics, University of Alabama}
\email{ovbeznosova@ua.edu}
\author{A. Reznikov}
\address{Department of Mathematics, Florida State University}
\email{reznikov@math.fsu.edu}
\begin{abstract}
In this paper we prove self-improvement properties of strong Muckenhoupt and Reverse H\"older weights with respect to a general Radon measure on $\R^n$. We derive our result via a Bellman function argument. An important feature of our proof is that it uses only the Bellman function for the one-dimensional problem for Lebesgue measure; with this function in hand, we derive dimension free results for general measures and dimensions.
\end{abstract}

\maketitle

\vspace{4mm}

\footnotesize\noindent\textbf{Keywords:} Bellman function, Dimension free estimates, Muckenhoupt weights, Reverse H\"older weights.

\vspace{2mm}

\noindent\textbf{Mathematics Subject Classification:} Primary: 42B35; Secondary: 43A85

\vspace{2mm}

\normalsize
\section{Introduction}
It is well known that Muckenhoupt weights on a real line with respect to the Lebesgue measure satisfy self-improvement properties in the following sense: for $p>q$ we always have $A_q \subset A_p$; but also for any function $w\in A_p$, there is an $\ep>0$ such that $w\in A_{p-\ep}$ (we refer to Definition \ref{defin1} for precise definitions). Besides that, there always exists a $q$ such that $w\in RH_q$. These self-improvement properties allow one to prove many important results in harmonic analysis, see, e.g., \cite{garcia} or a more recent paper \cite{perez2}. In \cite{perez1}, the authors considered {\it strong Muckenhoupt classes}; in particular, it was proven that for a Radon measure $\mu$ on $\R^n$ which is absolutely continuous with respect to the Lebesgue measure $dx$, any weight $w\in A_p^*$ satisfies a Reverse H\"older property with an exponent that does not depend on the dimension $n$. 

For $p>1$, we say that $w$ belongs to the {\it strong Muckenhoupt class with respect to $\mu$}, $w\in A_p^*$, if there exists a number $Q>1$ such that for any rectangular box $R\subset \R^n$ with edges parallel to axis, we have
$$
\av{w}{R}\av{w^{-1/(p-1)}}{R}^{p-1} \leqslant Q,
$$
where $\av{\vf}{R}$ denotes the average of the function $\vf$ over $R$:
$$
\av{\vf}{R}:=\frac{1}{\mu(R)}\ili_R \vf(x)d\mu(x).
$$
For $p>1$, we say that $w$ belongs to the {\it strong Reverse H\"older class with respect to $\mu$}, $w\in RH_p^*$, if there exists a constant $Q>1$ such that for any rectangular box $R$ with edges parallel to axis, we have
$$
\av{w^p}{R}^{1/p}\leqslant Q\av{w}{R}.
$$
We proceed with the following definition.
\begin{defin}\label{defin1}
Let $\mu$ be a Radon measure on $\R^n$ and $w$ be a function which is positive $\mu$-a.e. For $p>1$, we denote the {\it $A_p^*$-characteristic of $w$} by
$$
[w]_p:=\sup_R \av{w}{R}\av{w^{-1/(p-1)}}{R}^{p-1}
$$
and the {\it Reverse H\"older characteristic of $w$} by
$$
[w]_{RH_p}:=\sup_R \av{w^p}{R}^{1/p} \av{w}{R}^{-1},
$$
where both suprema are taken over rectangular boxes $R$ with edges parallel to axis. If $[w]_p<\infty$, we have $w\in A_p^*$ and if $[w]_{RH_p}<\infty$, we have $w\in RH_p^*$.
\end{defin}
In \cite{perez1} it was proved that if $\mu$ is an absolutely continuous Radon measure on $\R^n$ and $[w]_p<\infty$, then for some $q>1$ we have $[w]_{RH_{q}}<C<\infty$ with an explicit dimension free estimates on $q$ and $C$. It is of a particular importance that we can take 
$$
q=1+\frac{1}{2^{p+2}[w]_p}.
$$
To prove this result, the authors used a clever version of the Calder\'on--Zygmund decomposition from \cite{Lerner1}. The aim of this paper is to derive a sharp result from the one-dimensional case for Lebesgue measure (i.e., for the classical $A_p$ and $RH_p$ classes on $\R$). In this case the result from \cite{perez1} can be obtained, for example, by means of a so-called Bellman function; i.e., a function of two variables that satisfies certain boundary and concavity conditions in its domain. In the one-dimensional case this function is known explicitly, see \cite{vasyunin1}. It has been understood for some time that, for classes of functions like $A_p$, $RH_p$ or $BMO_p$, when we work with their strong multi-dimensional analogs (e.g., $A_p^*$ and $RH_p^*$), the one-dimensional Bellman function should prove the higher-dimensional results with dimension free constants. For the Lebesgue measure and the inclusion $RH^*_p \subset A_q^*$, this was done in \cite{wall}. The trick of using the Bellman function for one-dimensional problems was also used in \cite{volberg1}, \cite{petermichl1} and \cite{treil1} (in a slightly different setting, the same trick was also used in \cite{volberg2}). 
In this paper, we present a simple version of this trick for general measures; we prove the result from \cite{perez1} as well as all other results of self-improving type for strong Muckenhoupt and Reverse H\"older weights.

\bigskip

\textbf{Acknowledgements:} we are very grateful to Vasiliy Vasyunin for helpful discussions and suggestions on the presentation of this paper.

\section{Statement of the main result}
\subsection{Properties of Muckenhoupt weights $A_p^*$}
For $p_1:=-1/(p-1)$ and every $t\in [0,1]$ define 
$
u^{\pm}_{p_1}(t)
$
to be solutions of the equation
$$
(1- u)(1-p_1 u)^{-1/p_1}=t. 
$$
The function $u^+_{p_1}$ is decreasing and maps $[0,1]$ onto $[0, 1]$; the function $u^-_{p_1}$ is increasing and maps $[0,1]$ onto $[1/p_1, 0]$. 
For a fixed $Q>1$, define
\begin{equation}\label{etomi}
s_{p_1}^{\pm}=s_{p_1}^{\pm}(Q):=u^{\pm}_{p_1}(1/Q).
\end{equation}

Our first main result is the following.
\begin{theorem}\label{thmain1}
Let $\mu$ be a Radon measure on $\R^n$  with $\mu(H)=0$ for every hyperplane $H$ orthogonal to one of the coordinate axis. Fix numbers $p>1$ and $Q>1$ and set $p_1:=-1/(p-1)$. Then for every weight $w$ with $[w]_p=Q$ we have 
$$
w\in A_q^*, \; \; \; \; \; 1-s^-_{p_1}(Q)<q<\infty,
$$ and 
$$w\in RH^*_q, \;\;\;\; 1\leqslant q < 1/s^+_{p_1}(Q),$$
where $s_{p_1}^\pm(Q)$ are defined in \eqref{etomi}.  
These ranges for $q$ are sharp for $n=1$ and $\mu=dx$.
\end{theorem}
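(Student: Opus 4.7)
The plan is to exploit the sharp one-dimensional Bellman function for Lebesgue measure of \cite{vasyunin1} in a dimension-free way, via a martingale argument driven by bisections along coordinate hyperplanes that split every box into two pieces of equal $\mu$-measure. For the $A_p^*\to A_q^*$ statement, let $B_q(x,y)$ denote Vasyunin's function on the domain $\Om_Q=\{(x,y)\in\R_+^2:xy^{p-1}\le Q\}$, which is midpoint-concave on $\Om_Q$ and explicitly finite precisely when $q>1-s_{p_1}^-(Q)$; the analogous Bellman function for the $A_p^*\to RH_q^*$ problem is finite precisely when $1\le q<1/s_{p_1}^+(Q)$. In their 1D Lebesgue formulation both functions majorize the targets $\av{w^{-1/(q-1)}}{I}^{q-1}$ and $\av{w^q}{I}$, respectively.

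The key geometric ingredient is a bisection lemma: since $\mu(H)=0$ for every coordinate hyperplane, continuity of $\mu$ and the intermediate value theorem yield, for each rectangular box $R$ with $\mu(R)>0$ and each axis $e_i$, a hyperplane orthogonal to $e_i$ that splits $R$ into two sub-boxes of equal $\mu$-measure. Starting from $R_0:=R$, I would iteratively bisect every leaf, cycling through the directions $e_1,\dots,e_n$, to obtain partitions $\mathcal{R}_k$ of $R$ into $2^k$ rectangular sub-boxes, each of $\mu$-measure $2^{-k}\mu(R)$. Setting $x(R')=\av{w}{R'}$ and $y(R')=\av{w^{-1/(p-1)}}{R'}$ with the $\mu$-average convention of Definition \ref{defin1}, the hypothesis $[w]_p=Q$ places every pair $(x(R'),y(R'))$ in $\Om_Q$. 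Since sibling sub-boxes have equal $\mu$-measure, $(x(R'),y(R'))$ is exactly the midpoint of the pairs for its two children, so iterating the midpoint concavity inequality of $B_q$ up the bisection tree yields
$$
B_q(x(R),y(R))\ \ge\ \frac{1}{\mu(R)}\sum_{R'\in\mathcal{R}_k}\mu(R')\,B_q(x(R'),y(R')).
$$

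To close the argument I would let $k\to\infty$. The $\sigma$-algebras generated by $\{\mathcal{R}_k\}$ form an increasing filtration whose union generates the Borel $\sigma$-algebra on $R$ modulo $\mu$-null sets: the hyperplane hypothesis makes $\mu$ non-atomic and the cycling through axes guarantees genuine refinement in every direction. Doob's martingale convergence theorem and Lebesgue differentiation then give $x(R_k'(z))\to w(z)$ and $y(R_k'(z))\to w(z)^{-1/(p-1)}$ for $\mu$-a.e.\ $z\in R$, where $R_k'(z)\in\mathcal{R}_k$ is the box containing $z$. Combining with the explicit majorization of $B_q$ at the boundary of $\Om_Q$ and Fatou's lemma produces the desired bounds on $\av{w^{-1/(q-1)}}{R}^{q-1}$ and $\av{w^q}{R}$, hence the two inclusions. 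Sharpness for $n=1$, $\mu=dx$ is inherited from the sharpness of Vasyunin's one-dimensional function. The main obstacle is the martingale differentiation step: although $\mu(R_k'(z))\downarrow 0$ at rate $2^{-k}$, the boxes themselves may become very eccentric in Euclidean geometry; cycling across all axes together with the non-atomic coordinate-hyperplane hypothesis is precisely what ensures that the generated filtration is rich enough for standard martingale convergence to apply.
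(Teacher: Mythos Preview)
Your overall strategy matches the paper's: run the one-dimensional Bellman function of \cite{vasyunin1} down a tree of axis-parallel bisections of $R$, then pass to the limit via Fatou. But there is a genuine gap in your key step. The Bellman function $B_Q$ is only \emph{locally} concave: the inequality $B_Q(x)\ge\lambda B_Q(x^-)+(1-\lambda)B_Q(x^+)$ is asserted only when the entire segment $[x^-,x^+]$ lies in $\Omega_Q$. The domain $\Omega_Q=\{1\le x_1x_2^{p-1}\le Q\}$ is \emph{not} convex (the upper boundary $x_1x_2^{p-1}=Q$ bulges inward), so knowing $x^1,x^2,x\in\Omega_Q$ does not force $[x^1,x^2]\subset\Omega_Q$. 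Your equal-$\mu$-measure bisection gives you the midpoint relation, but it gives no control on the chord, and the midpoint-concavity inequality may simply fail.

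The paper repairs this in two moves. First, it works with $B_{Q_1}$ for a slightly larger $Q_1>Q$, creating a buffer between the data (which sit in $\Omega_Q$) and the upper boundary of the Bellman domain $\Omega_{Q_1}$. Second, it does \emph{not} insist on equal-measure halves: Lemma~\ref{lemmatech2} (following Lemma~4 of \cite{vasyunin1}) slides the cutting hyperplane until the chord $[x^1,x^2]$ is contained in $\Omega_{Q_1}$, and shows that this can be achieved with $\mu(R^i)/\mu(R)\in(c,1-c)$ for a uniform $c$. Only then is the concavity inequality applicable at every node; at the end one lets $Q_1\downarrow Q$ using continuity of $B_{Q_1}$. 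A secondary difference: the paper cuts orthogonally to the longest edge rather than cycling, so $\max_i\diam(R^{i,M})\to0$ and one can invoke the Lebesgue differentiation theorem for Radon measures directly, avoiding the delicate question (which you flag yourself) of whether the equal-measure filtration actually generates the Borel $\sigma$-algebra on $R$.
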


\subsection{Properties of Reverse H\"older weights}
For $p>1$ and every $t\in [0,1]$ we define $v^{\pm}_p(t)$ to be solutions of the equation 
$$
(1- pv)^{1/p}(1- v)^{-1}=t. 
$$
In this case, $v^+_p$ is a decreasing function that maps $[0,1]$ onto $[0, 1/p]$ and $v^-_p$ is an increasing function that maps $[0,1]$ onto $[-\infty, 0]$. As before for a fixed $Q>1$, we define 
\begin{equation}\label{neprohodili}
s_{p}^{\pm}=s_{p}^{\pm}(Q):=v^{\pm}_{p}(1/Q).
\end{equation}
Our second main result concerning Reverse H\"older weights is the following.
\begin{theorem}\label{thmain2}
Let $\mu$ be a Radon measure on $\R^n$  with $\mu(H)=0$ for every hyperplane $H$ orthogonal to one of the coordinate axis. Fix numbers $p>1$ and $Q>1$. Then for every weight $w$ with $[w]_{RH_p}=Q$ we have
$$
w\in A_q^*, \;\;\;\;\; 1-s_p^-(Q)<q<\infty,
$$
and
$$
w\in RH^*_q, \;\;\;\;\; 1 \leqslant q < 1/s_p^+(Q),
$$
where $s_p^\pm(Q)$ are defined in \eqref{neprohodili}. These ranges for $q$ are sharp for $n=1$ and $\mu=dx$.
\end{theorem}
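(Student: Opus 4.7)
The plan is to deduce the theorem from the sharp one-dimensional Bellman function associated to the $RH_p$ self-improvement problem on $(\R, dx)$, which is explicitly known from \cite{vasyunin1}. Call this function $B = B_{p,Q}$; it is defined on the two-dimensional domain
\[
\Omega_Q = \{(x_1,x_2) : x_1 > 0,\ x_1^p \leq x_2 \leq Q^p x_1^p\},
\]
and the two properties we will use are (i) genuine concavity along every line segment in $\Omega_Q$ and (ii) boundary/size asymptotics encoding the critical exponents $s_p^\pm(Q)$ from \eqref{neprohodili}. Two separate Bellman functions are used, one for the $A_q^*$ conclusion and one for the $RH_q^*$ conclusion, but both are built by the same scheme.

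The key step is to reduce each splitting of a multidimensional box under $\mu$ to a convex combination inside the two-dimensional set $\Omega_Q$. Fix a box $R\subset\R^n$ with axis-parallel edges and choose any coordinate direction $e_i$. The hypothesis $\mu(H)=0$ on hyperplanes $H\perp e_i$ makes the slab-mass function $t\mapsto \mu(R\cap\{x_i\leq t\})$ continuous, so there is a splitting hyperplane cutting $R$ into sub-boxes $R^-,R^+$ with $\mu(R^{\pm})=\mu(R)/2$. The pair $x_R:=(\av{w}{R},\av{w^p}{R})$ lies in $\Omega_Q$ (by Jensen's inequality and by $[w]_{RH_p}=Q$), and a direct computation shows
\[
x_R = \tfrac12\, x_{R^-} + \tfrac12\, x_{R^+}.
\]
Concavity of $B$ then produces the martingale-type inequality
\[
B(x_R)\geq \tfrac12 B(x_{R^-}) + \tfrac12 B(x_{R^+}).
\]
Iterating and cycling the direction $e_i$ so that the generated partitions $\mathcal{S}_k$ shrink in diameter, one obtains, for every $k\in\N$,
\[
B(x_R) \geq \sum_{S\in \mathcal{S}_k} \frac{\mu(S)}{\mu(R)}\, B(x_S).
\]

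Sending $k\to\infty$, Lebesgue's differentiation theorem for $\mu$ forces $x_S$ to converge $\mu$-a.e.\ to the diagonal point $(w(x),w(x)^p)$, and the right-hand side tends to $\av{b\circ w}{R}$, where $b$ is the boundary trace of $B$ along $\{x_2=x_1^p\}$. By design of $B$, the resulting averaged inequality on $R$ is exactly an $A_q^*$ or $RH_q^*$ estimate valid in the claimed range of $q$; taking the supremum over $R$ finishes the proof. Sharpness at $n=1$, $\mu=dx$ is inherited directly from the sharpness of the one-dimensional Bellman function.

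The main obstacle — and the point that makes the proof dimension-free — is that the concavity of $B$ is only ever invoked along the one-dimensional segments $[x_{R^-},x_{R^+}]\subset\Omega_Q$, so neither $n$ nor the detailed structure of $\mu$ enters the analytic content of $B$. The technical core is therefore to verify that Vasyunin's explicit function is genuinely concave in all directions on $\Omega_Q$ (not merely midpoint-concave along the splitting segments one sees in 1D for $dx$), and that its diagonal boundary data have precisely the asymptotics forcing the endpoints $1-s_p^-(Q)$ and $1/s_p^+(Q)$; this reduces to explicit computations with the formulas in \cite{vasyunin1}.
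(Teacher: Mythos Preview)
Your outline has the right architecture, but there is a genuine gap at the step where you apply concavity of $B$ along the segment $[x_{R^-},x_{R^+}]$. The domain $\Omega_Q=\{x_1^p\le x_2\le Q^p x_1^p\}$ is \emph{not convex}: its upper boundary $x_2=Q^p x_1^p$ is the graph of a convex function, so a chord between two points near that boundary lies above it, i.e.\ outside $\Omega_Q$. Knowing that the three points $x_{R^-}$, $x_{R^+}$, and their midpoint $x_R$ all lie in $\Omega_Q$ (which is what $[w]_{RH_p}\le Q$ gives you) does not force the whole segment $[x_{R^-},x_{R^+}]$ to lie in $\Omega_Q$. Since the concavity of $B$ is only asserted along segments contained in $\Omega_Q$, your ``martingale-type inequality'' is not justified as written, and the halving split $\mu(R^{\pm})=\mu(R)/2$ cannot be used blindly.

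This is exactly the difficulty the paper isolates in Lemma~\ref{lemmatech2}. There one first enlarges the domain by taking any $Q_1>Q$ and working with $B_{Q_1}$ on $\Omega_{Q_1}$; the data points $x_{R^\pm}$ still land in the strictly smaller $\Omega_Q\subset\Omega_{Q_1}$. One then starts from the bisecting hyperplane and, if the segment $[x^1,x^2]$ happens to exit $\Omega_{Q_1}$, slides the hyperplane continuously (decreasing the $\mu$-mass of one child) until the whole segment is captured by $\Omega_{Q_1}$; the slack $Q_1>Q$ guarantees this happens while the mass ratio $\mu(R^i)/\mu(R)$ stays in a fixed interval $(c,1-c)$. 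Only then can concavity be invoked. At the very end one lets $Q_1\downarrow Q$ and uses continuity of $B_{Q_1}$ in $Q_1$. Your proposal omits both the enlargement $Q_1>Q$ and the hyperplane adjustment, and without them the argument breaks down.
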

\section{Proof of the main results}
We begin with the following Theorem from \cite{vasyunin1}. This theorem ensures the existence of a certain Bellman function for a one-dimensional problem. In what follows, by letters without sub-indices (e.g., $x$, $x^\pm$) we denote points in $\R^2$ and by letters with sub-indices we denote the corresponding coordinates (e.g., $x^+_1$ denotes the first coordinate of $x^+$).  	
\begin{theorem}[Theorem 1 in \cite{vasyunin1}] Fix $p>1$ and set $p_1:=-1/(p-1)$. Also fix an $r\in (1/s_{p_1}^-, p_1]\cup [1, 1/s^+_{p_1}) $ for $s_{p_1}^\pm(Q)$ defined in \eqref{etomi}. For every $Q>1$ there exists a non-negative function $B_Q(x)$ defined in the domain $\Omega_Q:=\{x=(x_1, x_2)\in \R^2\colon 1\leqslant x_1 x_2^{-1/p_1} \leqslant Q\}$ with the following property: $B_Q(x)$ is continuous in $x$ and $Q$, and for any line segment $[x^-, x^+]\subset \Omega_Q$ and $x=\lambda x^- + (1-\lambda)x^+$, $\la\in [0,1]$, we have 
$$
B_Q(x)\geqslant \la B_Q(x^-) + (1-\la)B_Q(x^+).
$$
Moreover, $B(x_1, x_1^{p_1})=x_1^r$ and $B_Q(x)\leqslant c(r, Q) x_1^r$ for some positive constant $c(r, Q)$ and every $x\in \Omega_Q$. 
\end{theorem}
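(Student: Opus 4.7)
The plan is to follow the standard Bellman function construction for one-dimensional $A_p$-type problems, taking the candidate to be the \emph{true} Bellman function
\begin{equation*}
B_Q(x_1,x_2) := \sup\Big\{\av{w^r}{I} : w>0,\ \av{w}{I}=x_1,\ \av{w^{p_1}}{I}=x_2,\ [w]_p\leqslant Q\Big\},
\end{equation*}
where the supremum runs over all intervals $I\subset\R$ and admissible weights. Concavity along line segments is automatic from a splitting/gluing argument: two admissible pairs with averages $x^-,x^+\in\Om_Q$ are glued on an interval split in proportion $\la:(1-\la)$ into a single weight whose averages sit at $\la x^- + (1-\la)x^+$ and whose $A_p$-characteristic remains $\leqslant Q$. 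The lower-boundary condition $B_Q(x_1,x_1^{p_1})=x_1^r$ is then forced by Jensen's inequality, which pins down $w\equiv x_1$ on $\{x_2=x_1^{p_1}\}$.

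To obtain an explicit formula and the upper bound $B_Q(x)\leqslant c(r,Q)x_1^r$, I would foliate $\Om_Q$ by line segments along which $B_Q$ is affine. The choice of foliation is dictated by the requirement that the candidate satisfy the homogeneous Monge--Amp\`ere equation $\det \nabla^2 B_Q=0$ in the interior of $\Om_Q$, which forces one eigenvalue of the Hessian to vanish and determines the extremals up to a first-order ODE. On each extremal, $B_Q$ is affine and matches $x_1^r$ at the tangent point on the lower boundary curve $\{x_2=x_1^{p_1}\}$; solving the ODE gives a closed form, and the constant $c(r,Q)$ emerges as $\sup_{x\in\Om_Q} B_Q(x)/x_1^r$, finite thanks to the restriction on $r$.

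The admissible range $r\in(1/s_{p_1}^-,p_1]\cup[1,1/s_{p_1}^+)$ is precisely the set of exponents for which the foliation sweeps out $\Om_Q$ without degeneracy. The endpoints arise from the defining equation $(1-u)(1-p_1 u)^{-1/p_1}=t$ governing when an extremal tangent to one boundary first meets the opposite boundary $\{x_1 x_2^{-1/p_1}=Q\}$: the critical values of $r$ correspond exactly to the slopes at which the tangent family either fails to cover $\Om_Q$ or loses concavity, and a direct computation identifies these slopes with the roots $u_{p_1}^{\pm}(1/Q)$ in \eqref{etomi}.

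The main obstacle is verifying \emph{global} concavity of the candidate produced by the foliation, i.e., concavity along every chord in $\Om_Q$, not merely along the extremals. This reduces to checking that the second fundamental form of $B_Q$ transverse to the foliation is non-positive; the sign condition becomes a single algebraic inequality in the foliation parameter whose solution set is precisely the stated range of $r$. Joint continuity in $(x,Q)$ is then read off from the explicit formula, and it is this continuity in $Q$ that will be exploited later in the proofs of Theorems~\ref{thmain1} and \ref{thmain2}, when the one-dimensional Bellman function is iterated across coordinates to produce the dimension-free estimates for general Radon measures.
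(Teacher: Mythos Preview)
The paper does not prove this theorem; it is quoted verbatim as ``Theorem 1 in \cite{vasyunin1}'' and used as a black box in the proof of Theorem~\ref{thmain1}. Your sketch follows the standard Bellman-function construction carried out in that reference (define $B_Q$ as the extremal value of $\av{w^r}{I}$, obtain concavity by splitting/gluing, solve the homogeneous Monge--Amp\`ere equation via a foliation by extremal lines tangent to the lower boundary, and identify the admissible range of $r$ from the resulting algebraic condition), so there is nothing in the present paper to compare against.
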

To use the concavity property of the function $B_Q$ for our proof, we need the following lemma. Its proof is given in \cite[Lemma4]{vasyunin1} with an interval instead of the rectangle; however, the proof remains the same in our case.
\begin{lemma}\label{lemmatech2}
Let the measure $\mu$ be as before. Fix two numbers $Q_1>Q>1$ and a rectangular box $R\subset \R^n$ with edges parallel to the axis. For every coordinate vector $\bf{e}$, there exists a hyperplane $H$ normal to $\bf{e}$ that splits $R$ into two rectangular boxes $R^1$ and $R^2$ with the following properties:
\begin{enumerate}
\item For $i=1,2$ we have $\mu(R^i)/\mu(R)\in (c, 1-c)$ for some constant $c\in (0,1)$;
\item For every weight $w$ with $[w]_p\leqslant Q$, we have $[x^1, x^2]\subset \Omega_{Q_1}$ and, therefore,
$$
B_{Q_1}(x_1, x_2) \geqslant \frac{\mu(R^1)}{\mu(R)} B_{Q_1}(x_1^1, x_2^1)+\frac{\mu(R^2)}{\mu(R)} B_{Q_1}(x_2^i, x_2^2),
$$
where
\begin{align*}
x_1=\av{w}{R}, \;\;\;\;\;\;\;\; & x_2 = \av{w^{p_1}}{R} \\
x_1^i = \av{w}{R^i}, \;\;\;\;\;\;\;\;  & x_2^i = \av{w^{p_1}}{R^i}.
\end{align*}
\end{enumerate}
\end{lemma}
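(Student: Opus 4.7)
My plan is to adapt the proof of \cite[Lemma 4]{vasyunin1}, which treats the case $n=1$ with an interval in place of the rectangle. The multi-dimensional version is structurally the same because the slicing is one-dimensional — we slide a single hyperplane — and the measure-theoretic continuity in that direction is exactly what the hypothesis $\mu(H) = 0$ on coordinate hyperplanes provides.

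To prove part (1), fix a coordinate vector $\bf{e}$ and consider the one-parameter family of hyperplanes $H(\tau)$ normal to $\bf{e}$, parametrized by their projection $\tau$ onto $\bf{e}$. Since $\mu(H(\tau))=0$ for every $\tau$, the map $\tau \mapsto \mu(R^1(\tau))$ is continuous and sweeps out every value in $[0, \mu(R)]$ as $\tau$ traverses the projection of $R$. By the intermediate value theorem, every split ratio in $(0,1)$ is realizable, so the balance condition $\mu(R^i)/\mu(R) \in (c, 1-c)$ will hold on a nonempty open set of $\tau$'s, for a constant $c$ to be fixed in the course of (2).

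For part (2), note first that $x := (\av{w}{R}, \av{w^{p_1}}{R})$ lies in $\Omega_Q$ by H\"older's inequality and $[w]_p \leqslant Q$, and hence strictly inside $\Omega_{Q_1}$ since $Q_1>Q$. The endpoints $x^1(\tau), x^2(\tau)$ are the analogous averages over the sub-rectangles $R^1(\tau), R^2(\tau)$, which inherit the bound $[w]_p \leqslant Q$, so they also lie in $\Omega_Q$. The averaging identity
$$
x = \frac{\mu(R^1(\tau))}{\mu(R)}\, x^1(\tau) + \frac{\mu(R^2(\tau))}{\mu(R)}\, x^2(\tau)
$$
shows that the chord $[x^1(\tau), x^2(\tau)]$ passes through $x$ for every $\tau$ and collapses to $\{x\}$ as $\tau$ tends to either end of its admissible range.

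The main obstacle is arranging (1) and (2) simultaneously: the domain $\Omega_Q$ is not convex (the defining function $x_1 x_2^{p-1}$ has indefinite Hessian), so the chord between two points of $\Omega_Q$ can escape $\Omega_Q$, and the enlargement to $\Omega_{Q_1}$ must absorb this excursion. Following Vasyunin, I would exploit continuity of $\tau \mapsto (x^1(\tau), x^2(\tau))$ together with the strict gap $Q_1 > Q$: the set of $\tau$ producing a chord contained in $\Omega_{Q_1}$ is open, and a quantitative version of this openness yields a constant $c = c(p, Q, Q_1)$, independent of $n$, $\mu$, and $w$, such that some $\tau$ with $\mu(R^1(\tau))/\mu(R) \in (c, 1-c)$ does give a chord entirely inside $\Omega_{Q_1}$. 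Once (1) and (2) are in place, the stated inequality is an immediate application of the concavity property of $B_{Q_1}$ recorded in the preceding theorem, applied to the three points $x, x^1, x^2$.
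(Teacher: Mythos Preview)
Your proposal is correct and follows essentially the same route as the paper: both defer to \cite[Lemma~4]{vasyunin1}, observing that the one-dimensional sliding argument carries over verbatim because the slicing is along a single coordinate and the hypothesis $\mu(H)=0$ supplies the needed continuity of $\tau\mapsto\mu(R^1(\tau))$. Your outline in fact spells out more of Vasyunin's argument than the paper does (the paper's proof is a one-line reference), and you correctly isolate the one nontrivial point---that the uniform constant $c=c(p,Q,Q_1)$ comes from the strict gap $Q_1>Q$ via the quantitative geometry of chords through $x\in\Omega_Q$ relative to the boundary of $\Omega_{Q_1}$---while, like the paper, leaving that computation to the cited source.
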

We are ready to prove our main result.

\begin{proof}[Proof of Theorem \ref{thmain1}]
Fix a rectangular box $R$ with edges parallel to the axis, and take any $Q_1>Q$. We first explain how we split $R$ into two rectangular boxes. Take one of the $(n-1)$-dimensional faces of $R$, call it $R_{n-1}$, that has the largest $(n-1)$-area.  Among all $(n-2)$-dimensional faces of $R_{n-1}$, take one of those (call it $R_{n-2}$) that have the largest $(n-2)$-area. We proceed like this to get $R_{n-1}$, \ldots, $R_1$. Now take a vector ${\bf e}$ that is orthogonal to every $R_i$, $i=1,\ldots, n-1$.\footnote{In the case $n=2$, we just take ${\bf e}$ orthogonal to the longest side of $R$.} We now split $R$ according to Lemma \ref{lemmatech2}. Notice that all the corresponding $i$-dimensional faces of $R^1$ and $R^2$ have smaller $i$-areas than the corresponding $i$-dimensional faces of $R$.
We now take the boxes $R^1$ and  $R^{2}$ and repeat the same procedure. If we repeat this $M$ times, we get a family of rectangular boxes $\mathcal{R}=\{R^{i, M}\}_{i=1\ldots 2^{M}}$. Denote
\begin{align*}x_1&=\av{w}{R}, \;\;\;\;\;\;\;\; &x_2 &= \av{w^{p_1}}{R} \\
x_1^{i,M} &= \av{w}{R^{i,M}}, \;\;\;\;\;\;\;\;  &x_2^{i,M} &= \av{w^{p_1}}{R^{i,M}}.
\end{align*}
Abusing the notation, we also define step-functions
$$
x_1^{M}(t):=\sli_{i=1}^{2^{M}} x_1^{i,M} \mathbbm{1}_{R^{i,M}}(t), \;\;\;\;\;\;\;\; x_2^{M}(t):=\sli_{i=1}^{2^{M}} x_2^{i,M} \mathbbm{1}_{R^{i,M}}(t).
$$
From the construction of rectangular boxes, we notice that $x_1^M(t) \to w(t)$ and $x_2^M(t)\to w^{p_1}(t)$ as $M\to \infty$ for $\mu$-a.e. $t\in R$. Indeed, our splitting procedure (and the fact that we have $\mu(R^i)/\mu(R)\in (1-c, c)$ at every step) guarantees that 
$$
\max_{i=1,\ldots, 2^M} \textup{diam}(R^{i,M}) \to 0, \; \; M\to \infty,
$$
and we obtain the convergence of $x_1^M(t)$ and $x_2^M(t)$ from the Lebegue differentiation theorem for Radon measures. Therefore,
$$
B_{Q_1}(x_1, x_2)\geqslant \sli_{i=1}^{2^{M}} \frac{\mu(R^{i,M})}{\mu(R)} B_{Q_1}(x_1^{i,M}, x_2^{i,M}) = \frac{1}{\mu(R)}\ili_R B_{Q_1}(x_1^{M}(t), x_2^M(t))d\mu(t).
$$
By the Fatou lemma,
\begin{multline}
B_{Q_1}(x_1, x_2)\geqslant \frac{1}{\mu(R)}\ili_R \lim_{M\to \infty} B_{Q_1}(x_1^M(t), x_2^M(t))d\mu(t) =\\ \frac{1}{\mu(R)}\ili_R B_{Q_1}(w(t), w^{p_1}(t))d\mu(t) = \frac{1}{\mu(R)} \ili_R w^r(t)dt = \av{w^r}{R}.
\end{multline}
Since $B_{Q_1}(x_1, x_2)$ is continuous in $Q_1$ and the above estimate holds for any $Q_1>Q$, we get
$$
c(r, Q)\av{w}{R}^r=c(r, Q)x_1^r \leqslant \av{w^r}{R}.
$$
If we use this estimate for $q=r \in [1, 1/s^+_{p_1})$, we obtain $w\in RH^*_q$. If we use this estimate for $-1/(q-1)=r\in (1/s_{p_1}^-, p_1]$, we obtain $w\in A^*_q$ for $1-s^-_{p_1}(Q)<q<\infty$.
\end{proof}

To prove Theorem \ref{thmain2} we need to use a different Bellman function $B_Q$. Namely, the following result holds.
\begin{theorem}[Theorem 1 in \cite{vasyunin1}] Fix $p>1$ and $r\in (1/s_{p}^-, 1]\cup [p, 1/s^+_{p}) $ for $s^\pm_p$ defined in \eqref{neprohodili}. For every $Q>1$ there exists a non-negative function $B_Q(x)$ defined in the domain $\Omega_Q:=\{x=(x_1, x_2)\in \R^2\colon 1\leqslant x_1 x_2^{-1/p} \leqslant Q\}$ with the following property: $B_Q(x)$ is continuous in $x$ and $Q$, and for any line segment $[x^-, x^+]\subset \Omega_Q$ and $x=\lambda x^- + (1-\lambda)x^+$, $\la\in [0,1]$, we have 
$$
B_Q(x)\geqslant \la B_Q(x^-) + (1-\la)B_Q(x^+).
$$
Moreover, $B(x_1, x_1^{p})=x_1^r$ and $B_Q(x)\leqslant c(r, Q) x_1^r$ for some positive constant $c(r, Q)$ and every $x\in \Omega_Q$. 
\end{theorem}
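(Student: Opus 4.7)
The plan is to define $B_Q$ by the standard Bellman variational formulation and then pass to an explicit formula via characteristics. Fix a reference interval $J=[0,1]$ with Lebesgue measure, and, for $(x_1,x_2)\in\Omega_Q$, set
\[
B_Q(x_1,x_2):=\sup\bigl\{\langle w^r\rangle_J:\ w\ge 0,\ \langle w\rangle_J=x_1,\ \langle w^p\rangle_J=x_2,\ [w]_{RH_p}\le Q\bigr\}.
\]
Modulo the normalization of coordinates, the feasibility constraints from Jensen ($\langle w\rangle\le\langle w^p\rangle^{1/p}$) and from $RH_p$ ($\langle w^p\rangle^{1/p}\le Q\langle w\rangle$) cut out exactly $\Omega_Q$, so the sup is taken over a non-empty class on $\Omega_Q$, making $B_Q\ge 0$ well-defined.

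Concavity along segments $[x^-,x^+]\subset\Omega_Q$ is verified by the usual splicing argument: near-extremal weights $w^\pm$ for $x^\pm$ are concatenated on $[0,\lambda]$ and $[\lambda,1]$ to produce a weight $w$ on $J$ whose averages of $w$ and $w^p$ are the convex combinations, and whose average of $w^r$ equals $\lambda\langle (w^-)^r\rangle+(1-\lambda)\langle (w^+)^r\rangle$. Preservation of $[w]_{RH_p}\le Q$ on every sub-interval uses exactly that the entire segment $[x^-,x^+]$ stays in $\Omega_Q$; taking sup gives the stated concavity. The boundary identity $B_Q(x_1,x_1^p)=x_1^r$ follows from the equality case of Jensen: $\langle w\rangle=x_1$ together with $\langle w^p\rangle=x_1^p$ forces $w\equiv x_1$ a.e., so $\langle w^r\rangle=x_1^r$ is the only feasible value. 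Continuity in $x$ and $Q$ is a routine consequence of lower semicontinuity of the supremum and monotonicity of the feasible class in $Q$.

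The hard part is the upper bound $B_Q(x)\le c(r,Q)\,x_1^r$ together with the sharp identification of the admissible range $r\in(1/s_p^-,1]\cup[p,1/s_p^+)$. For this I would pass from the abstract $B_Q$ to an explicit formula by exploiting that a locally concave function which is maximal under the given boundary data must be affine along a foliation of $\Omega_Q$ by extremal segments; these are the characteristics of the degenerate Monge--Amp\`ere equation $B_{11}B_{22}-B_{12}^2=0$. Each such segment starts on the lower parabolic boundary $x_2=x_1^p$, where $B=x_1^r$ is prescribed, and is parametrized by the value $v$ of the extremizing weight at that endpoint. Integrating the transport ODE along each characteristic gives $B_Q$ in closed form, with the admissible range of $r$ determined by the demand that $w^r$ remain integrable along every characteristic up to the opposite boundary. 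The critical parameters $v=v_p^\pm(1/Q)$ are precisely the roots of $(1-pv)^{1/p}(1-v)^{-1}=1/Q$ where the characteristic foliation first degenerates, and this is exactly the mechanism producing the endpoints $1/s_p^\pm$ as well as the explicit constant $c(r,Q)$.
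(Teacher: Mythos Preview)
The paper does not prove this statement at all: it is quoted verbatim as Theorem~1 of \cite{vasyunin1} and used as a black box to run the Bellman iteration in the proof of Theorem~\ref{thmain2}. So there is no proof in the present paper to compare against; your proposal is a sketch of the construction carried out in \cite{vasyunin1} itself.

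As a sketch of that construction your outline is in the right spirit, but the concavity step contains a real gap. You write that ``preservation of $[w]_{RH_p}\le Q$ on every sub-interval uses exactly that the entire segment $[x^-,x^+]$ stays in $\Omega_Q$.'' This does not follow. After concatenating $w^-$ on $[0,\lambda]$ and $w^+$ on $[\lambda,1]$, to have $[w]_{RH_p}\le Q$ you must check every sub-interval $I=[a,b]$; when $a<\lambda<b$, the point $(\langle w\rangle_I,\langle w^p\rangle_I)$ is a convex combination of $(\langle w^-\rangle_{[a,\lambda]},\langle (w^-)^p\rangle_{[a,\lambda]})$ and the analogous point for $w^+$. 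Those two endpoints lie in $\Omega_Q$ (because $w^\pm$ are admissible), but they need not lie on the chord $[x^-,x^+]$, and $\Omega_Q$ is \emph{not} convex: the boundary curves $x_2=c\,x_1^{\,p}$ are convex, so a chord between two admissible points can exit the region. Hence the spliced weight may fail the $RH_p$ constraint on some straddling interval, and the inequality $B_Q(x)\ge\lambda B_Q(x^-)+(1-\lambda)B_Q(x^+)$ is not yet justified. In \cite{vasyunin1} this is handled either by verifying local concavity of the explicit candidate directly from its formula, or by a more delicate splitting combined with continuity and monotonicity in $Q$ (this is precisely the reason the present paper's Lemma~\ref{lemmatech2} works with $Q_1>Q$ rather than $Q$).

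The remainder of your proposal (characteristics of the homogeneous Monge--Amp\`ere equation, identification of the critical exponents via the roots of $(1-pv)^{1/p}(1-v)^{-1}=1/Q$, and the resulting bound $B_Q(x)\le c(r,Q)x_1^r$) is an accurate description of the machinery in \cite{vasyunin1}, but none of it is actually carried out; as written this is a plan, not a proof.
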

We also notice that the analog of Lemma \ref{lemmatech2} reads the same, and with this in hand, the proof of Theorem \ref{thmain1} is analogous to the proof of Theorem \ref{thmain2}; we leave the details to the reader.
\bibliography{ref}

\begin{thebibliography}{10}

\bibitem{wall}
M.~Dindo\v{s} and T.~Wall.
\newblock The sharp {$A_p$} constant for weights in a reverse-{H}\"older class.
\newblock {\em Rev. Mat. Iberoam.}, 25(2):559--594, 2009.

\bibitem{petermichl1}
K.~Domelevo, S.~Petermichl, and J.~Wittwer.
\newblock A linear dimensionless bound for the weighted {R}iesz vector.
\newblock {\em Bull. Sci. Math.}, 141(5):385--407, 2017.

\bibitem{volberg1}
O.~Dragi\v{c}evi\'c and A.~Volberg.
\newblock Bellman functions and dimensionless estimates of {L}ittlewood-{P}aley
  type.
\newblock {\em J. Operator Theory}, 56(1):167--198, 2006.

\bibitem{garcia}
J.~Garc\'{i}a-Cuerva and J.~L. Rubio~de Francia.
\newblock {\em Weighted norm inequalities and related topics}, volume 116 of
  {\em North-Holland Mathematics Studies}.
\newblock North-Holland Publishing Co., Amsterdam, 1985.
\newblock Notas de Matem\'atica [Mathematical Notes], 104.

\bibitem{perez2}
T.~Hyt\"onen and C.~P\'erez.
\newblock Sharp weighted bounds involving {$A_\infty$}.
\newblock {\em Anal. PDE}, 6(4):777--818, 2013.

\bibitem{Lerner1}
A.~A. Korenovskyy, A.~K. Lerner, and A.~M. Stokolos.
\newblock On a multidimensional form of {F}. {R}iesz's ``rising sun'' lemma.
\newblock {\em Proc. Amer. Math. Soc.}, 133(5):1437--1440, 2005.

\bibitem{perez1}
T.~Luque, C.~P\'erez, and E.~Rela.
\newblock Reverse {H}\"older property for strong weights and general measures.
\newblock {\em J. Geom. Anal.}, 27(1):162--182, 2017.

\bibitem{volberg2}
F.~Nazarov, A.~Reznikov, S.~Treil, and A.~Volberg.
\newblock A {B}ellman function proof of the {$L^2$} bump conjecture.
\newblock {\em J. Anal. Math.}, 121:255--277, 2013.

\bibitem{treil1}
S.~Treil.
\newblock Sharp {$A_2$} estimates of {H}aar shifts via {B}ellman function.
\newblock In {\em Recent trends in analysis}, volume~16 of {\em Theta Ser. Adv.
  Math.}, pages 187--208. Theta, Bucharest, 2013.

\bibitem{vasyunin1}
V.~I. Vasyunin.
\newblock Mutual estimates for {$L^p$}-norms and the {B}ellman function.
\newblock {\em Zap. Nauchn. Sem. S.-Peterburg. Otdel. Mat. Inst. Steklov.
  (POMI)}, 355(Issledovaniya po Line\u\i nym Operatoram i Teorii Funktsi\u\i .
  36):81--138, 237--238, 2008.

\end{thebibliography}
\bibliographystyle{plain}
\end{document}